\newcommand{\dd}{\mathrm d}
\newcommand{\Id}{\mathrm{Id}}
\newcommand{\KS}{K_{\mathcal S}}
\newcommand{\tauS}{\tau_{\mathcal S}}
\newcommand{\omegaS}{\omega_1^{\mathcal S}}
\newcommand{\bigcupdot}[1]{\;\cdot\hspace{-8pt}\bigcup_{#1}}
\newcommand{\upstar}[1]{{#1}^{\raise1pt\hbox{$\scriptscriptstyle*$}}}
\newcommand{\chilow}[1]{\chi_{\lower2pt\hbox{$\scriptstyle#1$}}}
\DeclareMathOperator{\Ker}{Ker}
\title[Isometric copies of continuous functions spaces]{Extension property and complementation of isometric copies of continuous functions spaces}
\author{Claudia Correa}
\thanks{The first author is sponsored by FAPESP (Process no.\ 2012/25171-0).}
\address{Departamento de Matem\'atica,\hfill\break\indent Universidade de S\~ao Paulo, Brazil}
\email{claudiac.mat@gmail.com}
\author{Daniel V. Tausk}
\address{Departamento de Matem\'atica,\hfill\break\indent Universidade de S\~ao Paulo, Brazil}
\email{tausk@ime.usp.br} \urladdr{http://www.ime.usp.br/\~{}tausk}
\subjclass[2010]{46B20,46E15,54G12}
\keywords{Banach spaces of continuous functions; extension and averaging operators; complementation of subspaces}
\date{October 15th, 2013}
\begin{document}

%\swapnumbers
\theoremstyle{plain}\newtheorem{teo}{Theorem}[section]
\theoremstyle{plain}\newtheorem{prop}[teo]{Proposition}
\theoremstyle{plain}\newtheorem{lem}[teo]{Lemma}
\theoremstyle{plain}\newtheorem{cor}[teo]{Corollary}
\theoremstyle{definition}\newtheorem{defin}[teo]{Definition}
\theoremstyle{remark}\newtheorem{rem}[teo]{Remark}
\theoremstyle{plain} \newtheorem{assum}[teo]{Assumption}
\theoremstyle{definition}\newtheorem{example}[teo]{Example}

\begin{abstract}
In this article we prove that every isometric copy of $C(L)$ in $C(K)$ is complemented if $L$ is compact Hausdorff of finite height and $K$
is a compact Hausdorff space satisfying the {\em extension property}, i.e., every closed subset of $K$ admits an extension operator. The space
$C(L)$ can be replaced by its subspace $C(L|F)$ consisting of functions that vanish on a closed subset $F$ of $L$. We also study the class
of spaces having the extension property, establishing some closure results for this class and relating it to other classes of compact
spaces.
\end{abstract}

\maketitle

\begin{section}{Introduction}

In this article we prove a result (Theorem~\ref{thm:main}) concerning the complementation of isometric copies of $C(L)$ in $C(K)$, for compact Hausdorff spaces
$K$ and $L$ satisfying certain assumptions. Our result concerns, in particular, the complementation of Banach subalgebras of $C(K)$ (Corollary~\ref{thm:complsubalgebra}).
Indeed, recall that a Banach subalgebra (with unity) of $C(K)$ is the image $\phi^*C(L)$ of the isometric embedding
$\phi^*:C(L)\ni f\mapsto f\circ\phi\in C(K)$, where $\phi:K\to L$ is a continuous surjection. The Banach
subalgebra $\phi^*C(L)$ is complemented in $C(K)$ if and only if $\phi$ admits an {\em averaging operator}, i.e., a bounded left
inverse $P:C(K)\to C(L)$ for the composition operator $\phi^*$. Finding conditions under which a continuous surjection admits an averaging operator
is a classical problem in Banach space theory (\cite{AA, Ditor, PelDisser}).

More precisely, Theorem~\ref{thm:main} deals with the complementation of isometric copies in $C(K)$ of the subspace $C(L|F)$ of $C(L)$
consisting of continuous functions vanishing on a closed subset $F$ of $L$.

We observe that the special case of Corollary~\ref{thm:complsubalgebra} when $K$ and $L$ are compact lines and $\phi:K\to L$ is a continuous {\em increasing\/} surjection is a corollary of Lemma~2.7 of \cite{KK}, though the general case (when $\phi$ is not necessarily increasing) is new. By a {\em compact line\/} we mean a linearly ordered set which is compact in the order topology.

The assumptions about the spaces $K$ and $L$ in Theorem~\ref{thm:main} are the following: the space $L$ is assumed to be scattered of finite
(Cantor--Bendixson) height and the space $K$ is required to satisfy a condition which we call the extension property. We say that $K$ has the {\em extension property\/} if every closed subset $F$ of $K$ admits an {\em extension operator\/} in $K$,
i.e., a bounded operator $E_F:C(F)\to C(K)$ such that $E_F(f)$ is an extension of $f$, for all $f\in C(F)$.

\smallskip

We note that if $K$ and $L$ satisfy the assumptions of Theorem~\ref{thm:main} and $C(K)$ contains an isometric copy of $C(L)$ then the space $C(L)$ is actually isomorphic
to $c_0(I)$, for some index set $I$ (Corollary~\ref{thm:corHol}). Thus, our Theorem~\ref{thm:main} can be seen as a Sobczyk-like theorem. 
The complementation of isomorphic copies of $c_0(I)$ in Banach spaces has been studied in \cite{ArgyrosLondon}. More precisely, in \cite[Theorem~1.1]{ArgyrosLondon}
it is shown that every isomorphic copy of $c_0(I)$ in a Banach space $X$ is complemented if $\vert I\vert<\aleph_\omega$ and $X$ is isomorphic to a subspace of a $C(K)$ space, with $K$ a Valdivia compactum. We observe that our Theorem~\ref{thm:main} does not follow from \cite[Theorem~1.1]{ArgyrosLondon},
even when $C(L)\cong c_0(I)$ with $\vert I\vert<\aleph_\omega$.
Namely, if $K$ is a separable nonmetrizable compact line (say, the double arrow space) then $K$ has the extension property, yet $X=C(K)$ does not satisfy the assumptions of \cite[Theorem~1.1]{ArgyrosLondon}. The latter statement
follows from \cite[Theorem~2.1]{CT} and from the fact that the space of continuous functions on a Valdivia compactum has the separable complementation
property (\cite[Lemma, pg.~494]{Valdivia2}).

\medskip

This article is organized as follows. Section~\ref{sec:CLCK} is devoted to the proof of Theorem~\ref{thm:main}. Section~\ref{sec:extensor} is
devoted to the study of the extension property. We start Section~\ref{sec:extensor} by stating results that are immediate consequences of well-known facts.
Namely, every metrizable compact space and every compact line have the (regular) extension property (Propositions~\ref{thm:propmetrizable} and \ref{thm:propline}).
Moreover, the extension property is local (Corollary~\ref{thm:corlocal}). Then we study some closure properties of the class of spaces having the
extension property. We leave open the question of whether this class is closed under continuous images, but we prove some results
in this direction (Proposition~\ref{thm:regavgop} and Corollary~\ref{thm:continuousimage}).
We prove also that the class of spaces having the regular extension property is closed under the operation of taking Alexandroff compactifications
of arbitrary topological sums (Proposition~\ref{thm:assembler}). It follows that all spaces that can be assembled from
compact lines and compact metrizable spaces by iterating this operation have the extension property.
We conclude the section with examples of specific spaces and of classes of spaces that do not have the extension property.
More specifically, we show that if $K$ is a ccc space with uncountable spread or if ($K$ is infinite and) $C(K)$ has the Grothendieck property
then $K$ does not have the extension property.

\end{section}

\begin{section}{Complementation of isometric copies of $C(L)$ in $C(K)$}
\label{sec:CLCK}

As usual, $C(K)$ denotes the Banach algebra of real-valued continuous functions on the compact Hausdorff space $K$, endowed with
the supremum norm, and $\mathbf1_K$ denotes its unity. If $F$ is a closed subset of $K$, we denote by $\rho_F:C(K)\to C(F)$ the restriction operator
$f\mapsto f\vert_F$ and by $C(K|F)$ its kernel.
\begin{defin}
Given a compact Hausdorff space $K$ and a closed subset $F$ of $K$, by an {\em extension operator\/} for $F$ in $K$ we mean a bounded linear map
$E_F:C(F)\to C(K)$ that is a right inverse for $\rho_F$. We say that $E_F$ is {\em regular\/} if $\Vert E_F\Vert\le1$ and $E_F(\mathbf1_F)=\mathbf1_K$
(equivalently, $E_F$ is regular if $E_F(\mathbf1_F)=\mathbf1_K$ and $E_F$ is a positive operator).
We say that $K$ has the {\em extension property\/} (resp., {\em regular extension property}) if every nonempty closed subset of $K$ admits an extension
operator (resp., a regular extension operator) in $K$.
\end{defin}
Recall that a bounded operator admits a bounded right inverse if and only if it is onto and its kernel is complemented. Therefore, a closed subset $F$ of
$K$ admits an extension operator in $K$ if and only if $C(K|F)$ is complemented in $C(K)$.
Obviously, the (regular) extension property is hereditary to closed subspaces.

\medskip

For the proof of the main result of this section, we need two lemmas.
\begin{lem}\label{thm:complementacao}
Let $X$ and $Y$ be Banach spaces, $T:X\to Y$ be a bounded operator, and $Z$ be a closed subspace of $X$. If\/ $T[Z]$ is closed and complemented in $Y$
and $Z\cap\Ker(T)$ is complemented in $X$ then $Z$ is complemented in $X$.
\end{lem}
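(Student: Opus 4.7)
The plan is to build a projection $P:X\to X$ onto $Z$ by splitting $Z$ as the internal direct sum of $Z\cap\Ker(T)$ and a suitable closed complement $W$ inside $Z$, and then projecting onto each summand separately.

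First I would fix a bounded projection $\pi:Y\to Y$ with image $T[Z]$ (available because $T[Z]$ is complemented) and a bounded projection $R:X\to X$ with image $Z\cap\Ker(T)$ (available by hypothesis). Since the image of $R$ lies in $Z$, the restriction $R|_Z$ is a bounded projection of $Z$ onto $Z\cap\Ker(T)$, so $W:=\Ker(R)\cap Z$ is a closed subspace of $Z$ with $Z=W\oplus(Z\cap\Ker(T))$.

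The key step uses the open mapping theorem: the restriction $T|_W:W\to T[Z]$ is a continuous linear bijection between Banach spaces. It is injective because $W\cap\Ker(T)=W\cap\bigl(Z\cap\Ker(T)\bigr)=\{0\}$, and it is surjective onto $T[Z]$ because every $z\in Z$ decomposes as $z=w+k$ with $w\in W$ and $k\in Z\cap\Ker(T)$, giving $T(z)=T(w)$. Since $T[Z]$ is closed, it is itself a Banach space, and the open mapping theorem produces a bounded inverse $S:T[Z]\to W$. I would then set
\[
P_1:=S\circ\pi\circ T:X\to X,
\]
which takes values in $W$, and verify it is a projection: for any $x\in X$, the element $\pi(T(x))$ lies in $T[Z]$, so $T(S(\pi(T(x))))=\pi(T(x))$, and consequently $P_1(P_1(x))=S(\pi(\pi(T(x))))=S(\pi(T(x)))=P_1(x)$. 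A short computation also shows $P_1|_W=\Id_W$, so $P_1$ is a bounded projection of $X$ onto $W$.

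Finally I would define
\[
P:=P_1+R-R\circ P_1=P_1+R\circ(\Id-P_1),
\]
which is bounded and linear, takes values in $W+(Z\cap\Ker(T))=Z$, and satisfies $P|_Z=\Id_Z$: given $z\in Z$, the decomposition $z=w+k$ above gives $P_1(z)=w$ and $z-P_1(z)=k\in Z\cap\Ker(T)$, so $R(z-P_1(z))=k$ and $P(z)=w+k=z$. Hence $P$ is a bounded projection onto $Z$. There is no real obstacle beyond invoking the open mapping theorem to produce the inverse $S$; all other steps are straightforward algebra with projections.
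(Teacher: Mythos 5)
Your proof is correct and follows essentially the same route as the paper: the paper obtains your inverse $S$ by letting $(\Id_X-R)\vert_Z$ pass to the quotient through $T\vert_Z:Z\to T[Z]$ (which is justified exactly by your open-mapping argument on $W=\Ker(R)\cap Z$), and its final projection $M\circ Q\circ T+R$ coincides with yours, since $P_1$ takes values in $\Ker(R)$ and hence your correction term $R\circ P_1$ vanishes.
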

\begin{proof}
Let $P$ be a projection of $X$ onto $Z\cap\Ker(T)$ and $Q$ be a projection of $Y$ onto $T[Z]$. The operator $(\Id_X-P)\vert_Z:Z\to Z$ passes
to the quotient through $T\vert_Z:Z\to T[Z]$, i.e., there exists a bounded operator $M:T[Z]\to Z$ with $M\circ T\vert_Z=(\Id_X-P)\vert_Z$.
It is easily checked that $M\circ Q\circ T+P$ is a projection of $X$ onto $Z$.
\end{proof}

\begin{lem}\label{thm:c0I}
Let $K$ be a compact Hausdorff space and $S:c_0(I)\to C(K)$ be an isometric immersion, where $I$ is a set. Let $\big\{\xi_i:i\in I\big\}\subset C(K)$ be
the image under $S$ of the canonical basis of $c_0(I)$ and set $G=\bigcap_{i\in I}\xi_i^{-1}(0)$. Then the image of $S$ is complemented in $C(K|G)$.
\end{lem}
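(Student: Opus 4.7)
The plan is to construct an explicit norm-one projection $P : C(K|G) \to S[c_0(I)]$. The strategy is to find ``peak points'' $x_i \in K$ with $\xi_j(x_i) = \delta_{ij}$, set $L := \{x_i : i \in I\} \cup G$, and argue that the restriction $\rho : C(K|G) \to C(L|G)$, composed with $S$ under a canonical identification $C(L|G) \cong c_0(I)$, is the identity on $c_0(I)$; this will make $S \circ \rho$ the desired projection.

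First I would produce the biorthogonal points. Since $\|\xi_i\|_\infty = \|e_i\|_{c_0(I)} = 1$ and $K$ is compact, pick $x_i \in K$ with $|\xi_i(x_i)| = 1$. After replacing $S$ by its pre-composition with the sign isometry of $c_0(I)$ sending $e_i \mapsto \mathrm{sign}(\xi_i(x_i))\, e_i$ (which alters neither the image $S[c_0(I)]$ nor $G$), we may assume $\xi_i(x_i) = 1$. For $j \neq i$ and $|a|, |b| \leq 1$, $\|a e_i + b e_j\|_{c_0(I)} = \max(|a|, |b|) \leq 1$, so $|a + b \xi_j(x_i)| = |(a \xi_i + b \xi_j)(x_i)| \leq 1$; taking $a = 1$ and $b = \mathrm{sign}(\xi_j(x_i))$ forces $\xi_j(x_i) = 0$.

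Next, set $\Sigma := \{x_i : i \in I\}$ and $L := \Sigma \cup G$. Biorthogonality implies that $\xi_j^{-1}\bigl((1/2, \infty)\bigr)$ is an open neighborhood of $x_j$ in $K$ meeting $L$ only at $x_j$ (it avoids $G \subseteq \xi_j^{-1}(0)$, and avoids $x_i$ for $i \neq j$), so every $x_j$ is isolated in $L$. A short net argument (if $x_{i(\alpha)} \to y \notin G$, some $\xi_j(y) \neq 0$ forces $i(\alpha) = j$ eventually, hence $y = x_j$) shows $\Sigma$ is closed in $K \setminus G$, whence $L$ is closed and compact in $K$. Since $\Sigma = L \setminus G$ is a discrete open subspace of $L$, extension by zero gives an isometric isomorphism $C(L|G) \cong C_0(\Sigma) \cong c_0(I)$ identifying $g$ with $(g(x_i))_{i \in I}$.

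Finally, $\rho : C(K|G) \to C(L|G)$ is bounded of norm at most $1$, and under the identification above $(\rho \circ S)(e_j) = (\xi_j(x_i))_i = (\delta_{ij})_i = e_j$, so $\rho \circ S = \Id_{c_0(I)}$ and $P := S \circ \rho$ is a norm-one projection of $C(K|G)$ onto $S[c_0(I)]$. The main technical hurdle will be the identification $C(L|G) \cong c_0(I)$, which reduces to verifying the closedness of $L$ in $K$ and the isolation of each $x_i$ in $L$; both follow cleanly from the biorthogonality $\xi_j(x_i) = \delta_{ij}$ established at the outset.
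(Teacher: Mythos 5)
Your proposal is correct and follows essentially the same route as the paper: both extract biorthogonal peak points $x_i$ with $\xi_j(x_i)=\delta_{ij}$ (up to sign) from the isometry condition, observe that the limit points of $\{x_i:i\in I\}$ lie in $G$ so that evaluation at the $x_i$ sends $C(K|G)$ into $c_0(I)$, and obtain the projection as $S$ composed with this evaluation map. The only cosmetic differences are your sign normalization and the packaging of the evaluation map as restriction to the closed set $L=\{x_i:i\in I\}\cup G$ followed by the identification $C(L|G)\cong c_0(I)$, where the paper simply writes $P(\xi)=\big(\xi_i(x_i)\xi(x_i)\big)_{i\in I}$.
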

\begin{proof}
Since $S$ is an isometric immersion, we have $\Vert\xi_i\Vert=1$ and thus, for each $i\in I$, there exists $x_i\in K$ with
$\vert\xi_i(x_i)\vert=1$. The fact that $S$ is an isometric immersion yields also that $\Vert\xi_i\pm\xi_j\Vert=1$ for $i\ne j$ and therefore
$\xi_j(x_i)=0$ for $i\ne j$. It follows that the points $x_i$ are distinct and that all the limit points of the set $\big\{x_i:i\in I\big\}$
are in $G$. This implies that, for $\xi\in C(K|G)$, the family $\big(\xi(x_i)\big)_{i\in I}$ is in $c_0(I)$. Hence the map $P:C(K|G)\to c_0(I)$ defined by
$P(\xi)=\big(\xi_i(x_i)\xi(x_i)\big)_{i\in I}$ is a bounded left inverse for $S$ and thus the image of $S$ is complemented in $C(K|G)$.
\end{proof}

\begin{teo}\label{thm:main}
Let $K$ be a compact Hausdorff space having the extension property. If $L$ is a compact Hausdorff scattered space of finite height and $F$ is a closed
subset of $L$ then every isometric copy of $C(L|F)$ in $C(K)$ is complemented.
\end{teo}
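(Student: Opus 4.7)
\medskip

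\noindent\textbf{Proof plan.} I would argue by induction on the Cantor--Bendixson height $n$ of $L$. When $L$ is finite the statement is trivial, since $C(L|F)$ is then finite-dimensional. For the inductive step with $L$ of height $n\ge 2$, I plan to split off the top layer of $L$ (the isolated points not in $F$) by means of Lemma~\ref{thm:c0I}, reduce the remaining problem to complementation of an isometric copy of $C(L'|F\cap L')$ in $C(G)$ for a suitable closed $G\subseteq K$, and then glue the pieces via Lemma~\ref{thm:complementacao}.

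Set $I=L\setminus(L'\cup F)$, where $L'=L^{(1)}$. Since the isolated points of a scattered compact space are dense, the closed linear span of $\{\chi_{\{p\}}:p\in I\}$ in $C(L|F)$ equals $C(L|F\cup L')$ and is isometric to $c_0(I)$. Writing $\xi_p=S(\chi_{\{p\}})$ for the given isometric embedding $S\colon C(L|F)\to C(K)$ and applying Lemma~\ref{thm:c0I}, I obtain points $x_p\in K$ with $\xi_p(x_p)=\epsilon_p\in\{\pm1\}$ and $\xi_q(x_p)=0$ for $q\ne p$, together with $G=\bigcap_{p\in I}\xi_p^{-1}(0)$; moreover $Z_0:=S(C(L|F\cup L'))$ is complemented in $C(K|G)$, and the extension property of $K$ makes $C(K|G)$ complemented in $C(K)$, so $Z_0$ is complemented in $C(K)$.

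The core technical step, which I expect to be the main obstacle, is the pointwise identity
\[
S(f)(x_p)=\epsilon_p f(p)\qquad\text{for every }f\in C(L|F)\text{ and }p\in I.
\]
I plan to derive it from the isometry of $S$ applied to the two perturbations $f+(\pm1-f(p))\chi_{\{p\}}$, both of norm at most $\|f\|$, which trap $S(f)(x_p)$ in two closed intervals whose intersection is exactly $\epsilon_p f(p)$. Once this is established, $\rho_G\circ S\colon C(L|F)\to C(G)$ annihilates $C(L|F\cup L')$ and hence factors as $\bar R\circ\rho_{L'}$ for a bounded $\bar R\colon C(L'|F\cap L')\to C(G)$, where $\rho_{L'}$ is restriction to $L'$. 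To show $\bar R$ is isometric, given $g\in C(L'|F\cap L')$ attaining its norm at some $q\in L'\setminus F$, I pick a net $p_\alpha\in I$ converging to $q$ (using the density of $I$ in $L\setminus F$), and pass to a subnet along which $x_{p_\alpha}\to y$ in $K$ and $\epsilon_{p_\alpha}$ is constant; Lemma~\ref{thm:c0I} forces $y\in G$, and continuity together with the identity above gives $|\bar R(g)(y)|=|g(q)|=\|g\|$. The reverse inequality comes from Tietze's theorem, which supplies a norm-preserving lift of $g$ to $C(L|F)$.

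With $\bar R$ isometric I apply Lemma~\ref{thm:complementacao} with $X=C(K)$, $Y=C(G)$, $Z=S(C(L|F))$ and $T=\rho_G$. The image $T[Z]=\bar R(C(L'|F\cap L'))$ is an isometric copy of $C(L'|F\cap L')$ in $C(G)$; since the extension property is hereditary to closed subspaces, $G$ has it, and $L'$ has height $n-1$, so the inductive hypothesis yields that $T[Z]$ is complemented in $C(G)$. The intersection $Z\cap\Ker(T)=Z\cap C(K|G)$ equals $Z_0$ (the inclusion $\supseteq$ is immediate, and for $\subseteq$ the injectivity of $\bar R$ converts $S(h)|_G=0$ into $h|_{L'}=0$), which has already been shown to be complemented in $C(K)$. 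Lemma~\ref{thm:complementacao} then produces a projection of $C(K)$ onto $S(C(L|F))$, completing the induction.
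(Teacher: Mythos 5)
Your proposal is correct and follows the same skeleton as the paper's proof: split off the top layer of isolated points via Lemma~\ref{thm:c0I}, factor $\rho_G\circ S$ through restriction to $L'$, and glue the two pieces with Lemma~\ref{thm:complementacao} and the induction on height. The one place where you genuinely diverge is the central step, the proof that the induced map $\bar R$ (the paper's $\overline S$) is isometric. The paper argues by cases on whether $g$ admits an extension $f$ with $\vert f\vert<1$ on the isolated points, using perturbations $f\pm\varepsilon f_i$ in one case and, in the other, an infinite set $J$ of indices where $\vert f(i)\vert=1$ together with a limit point of the corresponding $x_i$. You instead establish the stronger pointwise identity $S(f)(x_p)=\epsilon_p f(p)$ and then transport norm-attainment from a point $q\in L'\setminus F$ to a point of $G$ along a net of isolated points converging to $q$; this is arguably cleaner, and it also lets you handle a general closed $F$ directly (taking $I=L\setminus(L'\cup F)$), skipping the paper's preliminary reduction to the case $\vert F\vert\le1$. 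One small correction to your key computation: the perturbations $f+(\pm1-f(p))\chi_{\{p\}}$ have norm $\max\bigl(\sup_{q\ne p}\vert f(q)\vert,\,1\bigr)$, which is bounded by $\Vert f\Vert$ only when $\Vert f\Vert\ge1$, while the two resulting intervals intersect in the single point $\epsilon_p f(p)$ only when $\Vert f\Vert\le1$; so you should first normalize to $\Vert f\Vert=1$, which costs nothing since the identity is homogeneous in $f$.
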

\begin{proof}
First, we observe that we can restrict ourselves to the case when $F$ has at most one point. Namely, if $F$ has more than one point then
$C(L|F)$ is isometric to $C(L_1|F_1)$, where $L_1$ is the quotient of $L$ obtained by collapsing $F$ to a single point $p\in L_1$ and $F_1=\{p\}$.
Note that $L_1$ is again scattered of finite height.

In what follows, either $F=\emptyset$ or $F=\{p\}$.
We will use induction on the height of $L$. Denote by $L'$ the Cantor--Bendixson derivative of $L$.
The case when $F=\{p\}$ with $p\in L\setminus L'$ can be reduced to the case when $F$ is empty
since $C(L|\{p\})$ is isometric to $C(L\setminus\{p\})$.

Let $S:C(L|F)\to C(K)$ be an isometric immersion and denote the restriction of $\rho_{L'}:C(L)\to C(L')$ to $C(L|F)$
by $R:C(L|F)\to C(L'|F)$. Setting $I=L\setminus L'$, then $f\mapsto f\vert_I$ is an isometry from $C(L\vert L')$ onto $c_0(I)$.
For $i\in I$, denote by $f_i\in C(L|L')$ the characteristic function of $\{i\}$ and set $\xi_i=S(f_i)$.

Define $G$ as in the statement of Lemma~\ref{thm:c0I}. The map $\rho_G\circ S$ annihilates the set $\big\{f_i:i\in I\big\}$ and thus
also its closed span $C(L|L')=\Ker(R)$. It follows that $\rho_G\circ S$ passes to the quotient through $R$, i.e., there exists a bounded operator $\overline S:C(L'|F)\to C(G)$
such that $\overline S\circ R=\rho_G\circ S$.

We will prove
in a moment that $\overline S$ is an isometric immersion. Assuming that this is the case, we will conclude the proof by applying
Lemma~\ref{thm:complementacao} with $T=\rho_G$ and with $Z$ equals to the image of $S$. Note that $T[Z]$ equals the image of $\overline S$, which is (closed and)
complemented in $C(G)$, by the induction hypothesis. Moreover, $Z\cap\Ker(T)=S[C(L|L')]$ is complemented in $C(K|G)$ by Lemma~\ref{thm:c0I} and
$C(K|G)$ is complemented in $C(K)$, because $K$ has the extension property.

Now let us prove that $\overline S$ is an isometric immersion. Let $g\in C(L'|F)$ with $\Vert g\Vert=1$ be fixed. Then $\overline S(g)=S(f)\vert_G$,
where $f\in C(L)$ is an arbitrarily chosen extension of $g$. To prove that $\Vert\overline S(g)\Vert=1$,
we will show that $f$ can be chosen such that $\Vert f\Vert=1$ and such that $S(f)$ attains its maximum modulus on $G$.
Consider first the case when $g$ admits an extension $f\in C(L)$ with $\vert f\vert<1$ on $I$. Assume by contradiction that $S(f)$ attains its maximum
modulus at a point $x\in K\setminus G$. Then $\xi_i(x)\ne0$, for some $i\in I$. For $\varepsilon>0$ small enough, we have $\Vert f\pm\varepsilon f_i\Vert=1$
and
\[1=\Vert S(f\pm\varepsilon f_i)\Vert\ge\vert S(f)(x)\pm\varepsilon\xi_i(x)\vert>1.\]
Finally, let $f$ be an extension of $g$ with $\Vert f\Vert=1$. We can assume that the set
\[J=\big\{i\in I:\vert f(i)\vert=1\big\}\]
is infinite,
otherwise $f$ could be modified so that $\vert f\vert<1$ on $I$. For $i\in J$, we have $\Vert f\pm f_i\Vert=2$ and thus there exists
$x_i\in K$ with $\vert S(f)(x_i)\pm\xi_i(x_i)\vert=2$. Since $\vert S(f)(x_i)\vert\le1$ and $\vert\xi_i(x_i)\vert\le1$, we must have
$\vert S(f)(x_i)\vert=1$ and $\vert\xi_i(x_i)\vert=1$. As in the proof of Lemma~\ref{thm:c0I}, the points $x_i$, $i\in J$, are distinct and the
limit points of $\big\{x_i:i\in J\big\}$ are in $G$. Taking a limit point $x$ of the infinite set $\big\{x_i:i\in J\big\}$, then
$x\in G$ and $\vert S(f)(x)\vert=1$, concluding the proof.
\end{proof}

\begin{cor}\label{thm:complsubalgebra}
Let $K$ and $L$ be compact Hausdorff spaces and $\phi:K\to L$ be a continuous surjective map. If $K$ has the extension property and
$L$ is scattered of finite height then the Banach subalgebra $\phi^*C(L)$ of $C(K)$ is complemented (equivalently, $\phi$ admits
an averaging operator).\qed
\end{cor}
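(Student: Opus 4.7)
The plan is to derive this statement as an immediate consequence of Theorem~\ref{thm:main}, applied with $F=\emptyset$, together with the reformulation of complementation in terms of averaging operators that was already explained in the introduction. The entire substance of the corollary is contained in the theorem; the only tasks are to check that the hypotheses apply and to unpack the equivalence.

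First, I would observe that $\phi^*:C(L)\to C(K)$, $f\mapsto f\circ\phi$, is an isometric embedding. Indeed, since $\phi$ is surjective,
\[\Vert\phi^*f\Vert=\sup_{x\in K}\vert f(\phi(x))\vert=\sup_{y\in L}\vert f(y)\vert=\Vert f\Vert\]
for every $f\in C(L)$. Thus $\phi^*C(L)$ is an isometric copy of $C(L)=C(L|\emptyset)$ inside $C(K)$, and since $K$ has the extension property and $L$ is scattered of finite height, Theorem~\ref{thm:main} (taking $F=\emptyset$) yields that $\phi^*C(L)$ is complemented in $C(K)$.

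Second, I would justify the parenthetical equivalence. If $P:C(K)\to C(L)$ is an averaging operator, that is, a bounded operator with $P\circ\phi^*=\Id_{C(L)}$, then $\phi^*\circ P:C(K)\to C(K)$ is a bounded projection with image $\phi^*C(L)$. Conversely, since $\phi^*$ is an isometric embedding, it is an isomorphism from $C(L)$ onto $\phi^*C(L)$; given any bounded projection $\Pi:C(K)\to C(K)$ with image $\phi^*C(L)$, the composition $(\phi^*)^{-1}\circ\Pi:C(K)\to C(L)$ is then a bounded left inverse of $\phi^*$, hence an averaging operator.

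There is no real obstacle here, because all the difficulty has been concentrated in Theorem~\ref{thm:main}; the only thing to watch is that the statement of Theorem~\ref{thm:main} permits the choice $F=\emptyset$ (so that $C(L|F)=C(L)$), which it does.
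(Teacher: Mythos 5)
Your proposal is correct and is exactly the argument the paper intends: the corollary carries a \qed because it is an immediate consequence of Theorem~\ref{thm:main} with $F=\emptyset$, combined with the standard equivalence between complementation of $\phi^*C(L)$ and the existence of an averaging operator, which the paper already records in the introduction. Your verification that $\phi^*$ is isometric (via surjectivity of $\phi$) and your unpacking of the equivalence are both accurate and complete.
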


The following result is an immediate consequence of Theorem~\ref{thm:main} obtained by letting $L=I\cup\{\infty\}$ be the one-point compactification of the
discrete space $I$ and by setting $F=\{\infty\}$. Such result has been stated in \cite[Theorem~2.1]{Patterson} in the case when $I$ is countable and $K$ is the double arrow space, though the proof that appears in \cite{Patterson} works also in the general case.
\begin{cor}\label{thm:corc0I}
If $K$ is a compact Hausdorff space having the extension property then every isometric copy of $c_0(I)$ in $C(K)$ is complemented.\qed
\end{cor}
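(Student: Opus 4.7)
The plan is to realize $c_0(I)$ as a space of the form $C(L|F)$ to which Theorem~\ref{thm:main} applies directly. Specifically, I would let $L = I\cup\{\infty\}$ be the one-point compactification of the discrete topological space $I$, and set $F=\{\infty\}$. Then $L$ is a compact Hausdorff space whose Cantor--Bendixson derivative is $L'=\{\infty\}$ and $L''=\emptyset$, so $L$ is scattered of height at most $2$ (in particular, of finite height). Moreover, a continuous function on $L$ vanishing at $\infty$ is precisely a function on $I$ whose family of values belongs to $c_0(I)$, so the restriction map $f\mapsto f|_I$ is an isometric isomorphism from $C(L|F)$ onto $c_0(I)$.

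Given any isometric copy of $c_0(I)$ in $C(K)$, composing with the isometric identification above produces an isometric copy of $C(L|F)$ in $C(K)$. Since $K$ has the extension property and $L$ is scattered of finite height with $F$ closed in $L$, Theorem~\ref{thm:main} yields that this copy is complemented in $C(K)$. Transporting the complementing projection back through the isometric isomorphism $C(L|F)\cong c_0(I)$ gives a complementing projection onto the original copy of $c_0(I)$, completing the argument.

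There is no genuine obstacle here; the entire content of the corollary is the recognition that $c_0(I)=C(L|F)$ for the one-point compactification $L$ of a discrete set, so that the hypotheses of Theorem~\ref{thm:main} are satisfied trivially (scattered of height~$2$, with $F$ a singleton). Everything else is a direct invocation of the theorem.
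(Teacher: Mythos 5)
Your proposal is correct and is exactly the paper's own argument: the authors obtain the corollary by letting $L=I\cup\{\infty\}$ be the one-point compactification of the discrete space $I$, taking $F=\{\infty\}$, identifying $C(L|F)$ isometrically with $c_0(I)$, and invoking Theorem~\ref{thm:main}. Nothing further is needed.
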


\begin{rem}\label{thm:remcontinuousimage}
The class of compact Hausdorff spaces $K$ for which the thesis of Theorem~\ref{thm:main} holds is clearly closed under continuous images.
\end{rem}

\end{section}

\begin{section}{The extension property}\label{sec:extensor}

In this section we study the class of compact Hausdorff spaces having the (regular) extension property. We start by collecting some well-known
results about the existence of extension operators.

\medskip

In \cite[\S6]{PelDisser}, a compact Hausdorff space $K$ is called a {\em Dugundji space\/} if for every compact Hausdorff space $L$
containing (a homeomorphic copy of) $K$, there exists a regular extension operator for $K$ in $L$. It is well-known that every nonempty
compact metrizable space is a Dugundji space (\cite[Theorem~6.6]{PelDisser}). The following proposition is an immediate consequence of this fact.
\begin{prop}\label{thm:propmetrizable}
Every compact metrizable space has the regular extension property.\qed
\end{prop}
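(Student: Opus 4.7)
The plan is to deduce this immediately from the Dugundji property of compact metrizable spaces, which the authors have just invoked from \cite[Theorem~6.6]{PelDisser}. The key observation is that the regular extension property of $K$ is a statement about extending from closed subsets $F \subseteq K$, and each such $F$ is itself compact metrizable, hence itself a Dugundji space. Since being a Dugundji space is a statement about how $F$ sits inside \emph{every} compact Hausdorff superspace, the particular superspace $K$ is covered.

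More concretely, I would proceed as follows. Let $K$ be a nonempty compact metrizable space and let $F$ be a nonempty closed subset of $K$. Then $F$ is a closed subspace of a compact metrizable space, so $F$ is itself compact metrizable. By \cite[Theorem~6.6]{PelDisser}, $F$ is a Dugundji space. Applying the definition of Dugundji space to the compact Hausdorff space $K$ (which of course contains $F$ as a homeomorphic copy), one obtains a regular extension operator $E_F : C(F) \to C(K)$. Since $F$ was an arbitrary nonempty closed subset of $K$, this proves that $K$ has the regular extension property.

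There is essentially no obstacle: the proposition is a bookkeeping consequence of Pelczy\'nski's theorem, and the only thing one has to notice is the quantifier structure, namely that the Dugundji property is a universal statement over ambient compacta, and is being applied to closed subsets $F$ of $K$ rather than to $K$ itself. One could equivalently appeal to Borsuk's simultaneous extension theorem for metrizable pairs, but the cleanest presentation is to invoke \cite[Theorem~6.6]{PelDisser} directly, as the authors' preceding paragraph already sets up.
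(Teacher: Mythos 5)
Your proof is correct and is exactly the argument the paper intends: the proposition is stated with a \qed as an immediate consequence of the fact that every nonempty compact metrizable space is a Dugundji space, applied (as you correctly note) to the closed subset $F$ rather than to $K$ itself. No differences to report.
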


In a compact line, a regular extension operator for a nonempty closed subset can be constructed by decomposing its complement into disjoint
open intervals and by using Urysohn's Lemma. One thus obtain the following result.
\begin{prop}\label{thm:propline}
Every compact line has the regular extension property.
\end{prop}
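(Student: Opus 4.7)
My plan is to decompose $K\setminus F$ into its maximal order-convex open subsets (``convex components'') and define $E_F$ by interpolating between the boundary values of $f$ on each such piece. Since $K$ is compact it has a least element $m=\min K$ and a greatest element $M=\max K$, so the components fall into three types: a possibly-present leftmost piece $[m,\min F)$, a possibly-present rightmost piece $(\max F,M]$, and interior pieces $(a,b)$ with $a,b\in F$. For each interior piece I would use Urysohn's Lemma on the compact Hausdorff space $[a,b]$ to pick a continuous $t_{a,b}\colon[a,b]\to[0,1]$ with $t_{a,b}(a)=0$ and $t_{a,b}(b)=1$, and then set
\[
E_F(f)(x)=\begin{cases}f(x)&\text{if }x\in F,\\(1-t_{a,b}(x))f(a)+t_{a,b}(x)f(b)&\text{if }x\in(a,b),\\ f(\min F)&\text{if }x\in[m,\min F),\\ f(\max F)&\text{if }x\in(\max F,M].\end{cases}
\]

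Linearity of $E_F$ in $f$ is immediate, $E_F(f)$ extends $f$ by construction, and each value of $E_F(f)$ is a convex combination of values of $f$, so $\Vert E_F\Vert\le1$ and $E_F(\mathbf1_F)=\mathbf1_K$. Hence $E_F$ is a regular extension operator as soon as one checks $E_F(f)\in C(K)$. Continuity of $E_F(f)$ in the interior of each component is clear (from continuity of $t_{a,b}$ or from constancy on the extremal pieces), so the only nontrivial step is continuity at a point $p\in F$.

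I expect the main obstacle to lie in the case where $p$ is a one-sided (say right-sided) accumulation point of $F$, and I plan to resolve it via the following order-theoretic observation: \emph{for every $\epsilon>0$ there exists $\delta>0$ such that every interior component $(a,b)$ with $p<a<p+\delta$ also satisfies $b<p+\epsilon$.} This is proved by contradiction: otherwise one would find components $(a_n,b_n)$ with $a_n\to p^{+}$ and $b_n\ge p+\epsilon$, forcing $F\cap(p,p+\epsilon)\subseteq\bigcap_n(p,a_n]=\emptyset$ and contradicting the right-accumulation of $F$ at $p$. Granting this observation, right-continuity of $E_F(f)$ at $p$ is routine: given $\epsilon>0$, continuity of $f\vert_F$ yields $\delta_1$ with $\vert f(y)-f(p)\vert<\epsilon$ on $F\cap[p,p+\delta_1)$; the observation (applied with $\delta_1$ in place of $\epsilon$) produces $\delta_2$, and any $x\in(p,p+\delta_2)$ then either lies in $F$ or in an interior component $(a,b)$ with $a,b\in F\cap[p,p+\delta_1)$, so $\vert E_F(f)(x)-f(p)\vert<\epsilon$ because $E_F(f)(x)$ is a convex combination of values of $f$ that are each within $\epsilon$ of $f(p)$. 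The case where $p$ is the left endpoint of some component $(p,b)$ is handled at once by $t_{p,b}(x)\to0$ as $x\to p^{+}$, and left-continuity at $p$ is symmetric, which completes the plan.
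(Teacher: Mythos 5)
Your construction is exactly the one the paper has in mind (the paper's own ``proof'' is a citation to Kubi\'s, after describing precisely this decompose-the-complement-into-convex-pieces-and-apply-Urysohn strategy), and the plan is correct: the convex components do have the three forms you list, the interpolated function is a regular extension operator, and continuity at points of $F$ is the only real issue. The one caveat is that a general compact line need not be first countable at $p$, so the sequence-based contradiction in your ``order-theoretic observation'' should be rephrased without sequences --- most simply, since $p$ is a right-accumulation point of $F$ you can pick a single $q\in F$ with $p<q$ inside the given neighborhood and observe that every component meeting $(p,q)$ is entirely contained in $(p,q)$ and hence has both endpoints in $F\cap[p,q]$ --- after which the rest of your argument goes through verbatim.
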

\begin{proof}
See \cite[Lemma~4.2]{Kubis}.
\end{proof}

A standard argument using partitions of unity shows that the (regular) extension property is local.
\begin{prop}
Let $K$ be a compact Hausdorff space and $F$ be a nonempty closed subset of $K$. If every point $x\in F$ has a closed neighborhood
$V$ in $K$ such that $V\cap F$ admits an extension operator (resp., a regular extension operator) in $V$ then $F$ admits an extension operator
(resp., a regular extension operator) in $K$.
\end{prop}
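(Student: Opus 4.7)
The strategy is to glue local (regular) extension operators into a global one via a partition of unity.

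First, using compactness of $F$ and the hypothesis, I would extract finitely many points $x_1,\dots,x_n\in F$ with closed neighborhoods $V_1,\dots,V_n$ in $K$, where each $V_i\cap F$ admits an extension (resp.\ regular extension) operator $E_i\colon C(V_i\cap F)\to C(V_i)$, and such that the open sets $W_i=\operatorname{int}(V_i)$ already cover $F$. Adjoining $W_0=K\setminus F$ yields an open cover $\{W_0,W_1,\dots,W_n\}$ of $K$; since $K$ is compact Hausdorff (hence normal), there is a subordinate partition of unity $\{\psi_0,\psi_1,\dots,\psi_n\}$ on $K$, with $\operatorname{supp}(\psi_i)\subseteq W_i$ for each $i$.

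Given $f\in C(F)$, set $g_i=E_i(f\vert_{V_i\cap F})\in C(V_i)$ for $i\ge1$. Because $\operatorname{supp}(\psi_i)\subseteq\operatorname{int}(V_i)$, the product $\psi_ig_i$ extends by zero to a continuous function $h_i$ on $K$ (continuity is checked on the two open sets $\operatorname{int}(V_i)$ and $K\setminus\operatorname{supp}(\psi_i)$, whose union is $K$). For the non-regular case I would define
\[
E_F(f)=\sum_{i=1}^n h_i.
\]
On $F$, $\psi_0$ vanishes, so $\sum_{i=1}^n\psi_i=1$ there, and since $g_i$ agrees with $f$ on $V_i\cap F$ we get $E_F(f)\vert_F=f$. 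Linearity is clear, and the pointwise estimate $|E_F(f)(x)|\le\sum_i\psi_i(x)\Vert E_i\Vert\,\Vert f\Vert\le(\max_i\Vert E_i\Vert)\Vert f\Vert$ provides the norm bound.

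The main subtlety is the regular case: the formula above yields $E_F(1_F)=1-\psi_0$, which generally differs from $1_K$. To correct this without sacrificing positivity or the norm bound, I would fix a point $x_0\in F$ (available because $F\neq\emptyset$) and redefine
\[
E_F(f)=\sum_{i=1}^n h_i+\psi_0\cdot f(x_0).
\]
Since $\psi_0\vert_F=0$, the modified $E_F(f)$ still extends $f$; regularity of each $E_i$ gives $g_i=\mathbf1_{V_i}$ when $f=\mathbf1_F$, so $E_F(\mathbf1_F)=\sum_{i=1}^n\psi_i+\psi_0=\mathbf1_K$; positivity is preserved (all coefficients are nonnegative); and the bound $|E_F(f)(x)|\le\sum_{i=1}^n\psi_i(x)\Vert f\Vert+\psi_0(x)\Vert f\Vert=\Vert f\Vert$ yields $\Vert E_F\Vert\le1$. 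I expect the only real obstacle to be arranging this correction term so that both the extension identity on $F$ and the condition $E_F(\mathbf1_F)=\mathbf1_K$ hold simultaneously, which is precisely what the vanishing of $\psi_0$ on $F$ guarantees.
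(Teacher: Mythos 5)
Your proof is correct and follows exactly the standard partition-of-unity argument that the paper alludes to just before the statement; the paper itself does not write it out but only cites Lemma~3.6 of Pe\l czy\'nski's dissertation. All the delicate points---the continuity of the extension by zero of $\psi_i g_i$, the verification that $E_F(f)$ restricts to $f$ on $F$ because $\psi_0$ vanishes there, and the correction term $\psi_0\cdot f(x_0)$ needed to obtain $E_F(\mathbf1_F)=\mathbf1_K$ while preserving positivity and the norm bound in the regular case---are handled properly.
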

\begin{proof}
See \cite[Lemma~3.6]{PelDisser}.
\end{proof}

\begin{cor}\label{thm:corlocal}
Let $K$ be a compact Hausdorff space. If every point of $K$ has a closed neighborhood having the (resp., regular) extension property then
$K$ has the (resp., regular) extension property.\qed
\end{cor}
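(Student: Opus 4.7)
The plan is to derive Corollary~\ref{thm:corlocal} as an immediate consequence of the preceding proposition, with essentially no new work beyond unpacking definitions. Fix a nonempty closed subset $F$ of $K$; the goal is to produce an extension (resp., regular extension) operator for $F$ in $K$, and by the preceding proposition it suffices to check its hypothesis: that for every $x \in F$ there is a closed neighborhood $V$ of $x$ in $K$ such that $V \cap F$ admits an extension (resp., regular extension) operator in $V$.

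For this, I would take $x \in F$, apply the hypothesis of the corollary to get a closed neighborhood $V$ of $x$ in $K$ that itself has the (regular) extension property, and then observe that $V \cap F$ is a closed subset of $V$ containing $x$, hence nonempty. Since the (regular) extension property was noted earlier in the section to be hereditary to closed subspaces, $V \cap F$ admits an extension (resp., regular extension) operator in $V$. This verifies the hypothesis of the preceding proposition, which then delivers the desired extension operator for $F$ in $K$.

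There is no real obstacle here; the substantive content — the partition of unity argument that globalizes local extension operators — has already been packaged into the preceding proposition, and the only conceptual point to flag is that applying the property of $V$ to the closed subset $V \cap F$ is legitimate precisely because of the hereditary observation. I would write the proof as a single short paragraph that chains these steps together.
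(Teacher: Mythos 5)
Your proof is correct and matches the paper's intended argument: the corollary is left without an explicit proof precisely because it follows immediately from the preceding proposition in the way you describe. One minor simplification: you do not actually need the hereditary-to-closed-subspaces remark, since the hypothesis that $V$ has the (regular) extension property already asserts, by definition, that every nonempty closed subset of $V$ --- in particular $V\cap F$, which is nonempty because it contains $x$ --- admits a (regular) extension operator in $V$.
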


\subsection{Closure properties of the class of spaces having the extension property}
We will first establish that, under some additional assumptions, a continuous image of a space having the extension property also has the extension
property (Proposition~\ref{thm:regavgop} and Corollary~\ref{thm:continuousimage}). Then, we show that the class of spaces having the regular extension
property is closed under the operation of taking Alexandroff compactifications of arbitrary topological sums (Proposition~\ref{thm:assembler}).

\begin{prop}\label{thm:regavgop}
Let $\phi:K\to L$ be a continuous surjection, where $K$ and $L$ are compact Hausdorff spaces. Assume that $\phi$ admits a regular averaging operator
$P:C(K)\to C(L)$ (i.e., $P$ is an averaging operator with $\Vert P\Vert\le1$). If $K$ has the (resp., regular) extension property then
also $L$ has the (resp., regular) extension property.
\end{prop}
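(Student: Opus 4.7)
The plan is to lift a given closed subset $F\subseteq L$ to $F':=\phi^{-1}(F)$ inside $K$, extend there using the hypothesis on $K$, and then push the extension back down to $L$ via $P$.

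Concretely, $F'=\phi^{-1}(F)$ is closed in $K$, so I can invoke the extension property of $K$ to obtain an extension operator $E_{F'}\colon C(F')\to C(K)$, chosen to be regular when $K$ has the regular extension property. The restriction $\phi_F:=\phi|_{F'}\colon F'\to F$ is a continuous surjection, inducing an isometric composition operator $\phi_F^*\colon C(F)\to C(F')$. I would then set
\[ E_F := P\circ E_{F'}\circ\phi_F^*\colon C(F)\to C(L), \]
a bounded operator with $\Vert E_F\Vert\le \Vert P\Vert\,\Vert E_{F'}\Vert\le \Vert E_{F'}\Vert$.

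The crux is verifying that $E_F(g)$ restricts to $g$ on $F$ for every $g\in C(F)$. For this I would exploit that $P$ is automatically positive: since $\Vert P\Vert\le1$ and $P(\mathbf{1}_K)=P(\phi^*\mathbf{1}_L)=\mathbf{1}_L$, the standard fact that a norm-one unital operator between spaces of continuous functions is positive applies. The Riesz representation theorem then provides probability measures $\mu_y$ on $K$ with $(Pf)(y)=\int_K f\,\dd\mu_y$ for every $f\in C(K)$ and $y\in L$. The averaging identity $P\circ\phi^*=\Id_{C(L)}$ translates into $\phi_*\mu_y=\delta_y$, forcing $\operatorname{supp}\mu_y\subseteq\phi^{-1}(y)$. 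Thus, for $y\in F$,
\[ E_F(g)(y) = \int_{\phi^{-1}(y)}E_{F'}(g\circ\phi_F)\,\dd\mu_y = \int_{\phi^{-1}(y)}g\circ\phi_F\,\dd\mu_y = g(y), \]
since $E_{F'}$ is an extension of $g\circ\phi_F$ on $F'$ and $\phi_F\equiv y$ on $\phi^{-1}(y)$.

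In the regular case, $\Vert E_{F'}\Vert\le1$ gives $\Vert E_F\Vert\le1$, and $E_F(\mathbf{1}_F)=P\bigl(E_{F'}(\phi_F^*\mathbf{1}_F)\bigr)=P(E_{F'}(\mathbf{1}_{F'}))=P(\mathbf{1}_K)=\mathbf{1}_L$, confirming regularity of $E_F$. The main conceptual obstacle is pinning down the support of $\mu_y$; once one reads the averaging condition as the statement that $\mu_y$ pushes forward to $\delta_y$ under $\phi$, the rest of the argument is routine bookkeeping.
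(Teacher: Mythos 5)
Your proposal is correct and follows essentially the same route as the paper: both define $E_F=P\circ E_{\phi^{-1}(F)}\circ(\phi|_{\phi^{-1}(F)})^*$ and verify the extension property via the representation of $P$ by probability measures $\mu_y$ supported on the fibers $\phi^{-1}(y)$ (the paper cites Pe\l czy\'nski for this fact, while you derive it from positivity and the push-forward identity $\phi_*\mu_y=\delta_y$). Your verification of regularity in the regular case matches what the paper leaves implicit.
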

\begin{proof}
Given a nonempty closed subset $F$ of $L$, let $G=\phi^{-1}[F]$ and $E_G$ be a (regular) extension operator for $G$ in $K$. Set
\[E_F=P\circ E_G\circ(\phi\vert_G)^*:C(F)\longrightarrow C(L).\]
That $E_F$ is indeed an extension operator follows easily using the fact that, for all $x\in L$, there exists a probability measure
$\mu_x$ on $\phi^{-1}(x)$ such that $P(h)(x)=\int_{\phi^{-1}(x)}h\,\dd\mu_x$, for all $h\in C(K)$ (see \cite[Proposition~4.1]{PelDisser}).
\end{proof}

The following definition and lemma are required for the proof of Proposition~\ref{thm:willfollow}, from which Corollary~\ref{thm:continuousimage}
will follow.
\begin{defin}
Given a compact Hausdorff space $K$, a closed subset $F$ of $K$, and a closed subset $G$ of $F$, by an {\em extension operator
modulo $G$\/} for $F$ in $K$ we mean a bounded operator which is a right inverse for the map
$\rho_F\vert_{C(K|G)}:C(K|G)\to C(F|G)$.
\end{defin}

\begin{lem}\label{thm:lemamoduloG}
Let $K$ be a compact Hausdorff space, $F$ be a closed subset of $K$, and $G$ be a closed subset of $F$.
If $F$ admits an extension operator modulo $G$ in $K$ and $G$ admits an extension operator in $K$ then $F$ admits an extension operator in $K$.
\end{lem}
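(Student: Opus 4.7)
The plan is to patch together the two given operators by first extending across $G$ and then correcting by an operator that vanishes on $G$. Write $E_{F,G}:C(F|G)\to C(K|G)$ for the extension operator for $F$ in $K$ modulo $G$, and $E_G:C(G)\to C(K)$ for the extension operator for $G$ in $K$. Given $f\in C(F)$, I first set $h=E_G(f\vert_G)\in C(K)$, so that $h\vert_G=f\vert_G$; consequently $f-h\vert_F\in C(F)$ vanishes on $G$, i.e., $f-h\vert_F\in C(F|G)$. Then the formula
\[
E_F(f)=E_G(f\vert_G)+E_{F,G}\bigl(f-E_G(f\vert_G)\vert_F\bigr)
\]
makes sense and produces an element of $C(K)$.

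The verification that $E_F$ is linear and bounded is immediate from the fact that it is a composition and sum of bounded linear operators (the maps $f\mapsto f\vert_G$, $f\mapsto f\vert_F$, $E_G$ and $E_{F,G}$ are all bounded). To check that $\rho_F\circ E_F=\Id_{C(F)}$, I restrict the displayed formula to $F$: the second summand $E_{F,G}(f-h\vert_F)$, being in $C(K|G)$, is mapped by $\rho_F\vert_{C(K|G)}$ to $f-h\vert_F$ by the defining property of $E_{F,G}$; adding the restriction of the first summand $h\vert_F$ gives $f$.

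There is no serious obstacle here; the only point requiring a moment's thought is the a priori check that the argument of $E_{F,G}$ lies in its domain $C(F|G)$, which is precisely the purpose of first applying $E_G$ to $f\vert_G$: this forces $h$ to agree with $f$ on $G$, hence forces $f-h\vert_F$ to vanish on $G$. After that, the identity $\rho_F\circ E_F=\Id_{C(F)}$ is a one-line bookkeeping computation.
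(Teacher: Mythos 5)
Your proof is correct and is essentially the paper's argument in explicit form: the paper observes that $C(K|F)$ is complemented in $C(K|G)$ and $C(K|G)$ is complemented in $C(K)$, then invokes the equivalence between complemented kernels and bounded right inverses, and unwinding that equivalence through the two projections yields exactly your formula $E_F(f)=E_G(f\vert_G)+E_{F,G}\bigl(f-E_G(f\vert_G)\vert_F\bigr)$. The only (minor) difference is that your direct construction never needs the surjectivity of the restriction maps, which the abstract complementation phrasing implicitly uses when passing back from a complemented kernel to a right inverse.
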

\begin{proof}
Since $F$ admits an extension operator modulo $G$ in $K$, the subspace $C(K|F)$ is complemented in $C(K|G)$. Moreover, since $G$ admits an extension
operator in $K$, the subspace $C(K|G)$ is complemented in $C(K)$. Hence $C(K|F)$ is complemented in $C(K)$.
\end{proof}

\begin{prop}\label{thm:willfollow}
Let $K$ be a compact Hausdorff space and denote by $K'$ its Cantor--Bendixson derivative. If $K'$ has the extension property and $K'$ admits an extension
operator in $K$ then $K$ has the extension property.
\end{prop}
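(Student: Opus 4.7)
The plan is to prove this by showing that for any nonempty closed subset $F$ of $K$, we can find an extension operator for $F$ in $K$ using Lemma~\ref{thm:lemamoduloG} with the auxiliary closed set $G = F\cap K'$. The key observation is that the set $F\setminus G = F\setminus K'$ consists entirely of \emph{isolated points of $K$}, so functions vanishing on $G$ behave in a $c_0$-like fashion and admit an almost trivial extension by zero.

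First I would handle the possibility that $G$ is empty. If $F\cap K' = \emptyset$, then $F$ is a closed subset of the open discrete set $K\setminus K'$; compactness of $K$ forces $F$ to be finite, and in this case an extension operator for $F$ in $K$ is constructed directly from Urysohn's Lemma. So I may assume $G\ne\emptyset$.

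Next I would verify the two hypotheses of Lemma~\ref{thm:lemamoduloG}. Since $G$ is a nonempty closed subset of $K'$ and $K'$ has the extension property, there is an extension operator $E_G^{K'}\colon C(G)\to C(K')$. Composing with the extension operator $E_{K'}\colon C(K')\to C(K)$ given by hypothesis, I obtain an extension operator for $G$ in $K$. This takes care of the second hypothesis.

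The heart of the argument is to produce an extension operator modulo $G$ for $F$ in $K$, that is, a bounded right inverse of $\rho_F\vert_{C(K\vert G)}\colon C(K\vert G)\to C(F\vert G)$. My candidate is the natural \textbf{extend-by-zero} map
\[
E(f)(x) = \begin{cases} f(x), & x\in F\setminus G,\\ 0, & x\in K\setminus(F\setminus G). \end{cases}
\]
Here the main thing to verify is that $E(f)$ lies in $C(K\vert G)$, and the key claim is that for every $\varepsilon>0$ the set $\{x\in F\setminus G:|f(x)|\ge\varepsilon\}$ is finite. Indeed, any limit point in $K$ of an infinite subset of $F\setminus G$ lies in $F\cap K' = G$, where $f$ vanishes. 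From this claim, continuity of $E(f)$ at any point $x\in K\setminus(F\setminus G)$ follows by choosing a neighborhood of $x$ that avoids the finitely many ``bad'' isolated points; at points of $F\setminus G$ itself, continuity is automatic by isolation. The estimate $\Vert E\Vert\le 1$ is immediate, and $E(f)\vert_F = f$ holds because $E(f)$ agrees with $f$ on $F\setminus G$ and both vanish on $G$. With these two pieces in hand, Lemma~\ref{thm:lemamoduloG} delivers an extension operator for $F$ in $K$.

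I do not anticipate a serious obstacle; the only delicate step is verifying continuity of $E(f)$ on $G$, which hinges precisely on the $c_0$-type finiteness property above. This is analogous to the reasoning used at the end of the proof of Theorem~\ref{thm:main} to handle limit points of isolated points lying in the Cantor--Bendixson derivative.
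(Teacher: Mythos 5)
Your proposal is correct and follows essentially the same route as the paper: the paper also applies Lemma~\ref{thm:lemamoduloG} with $G=F\cap K'$, obtains an extension operator for $F\cap K'$ in $K$ by composition, and uses exactly the extend-by-zero map as the extension operator modulo $F\cap K'$. Your additional verification of continuity via the finiteness of $\{x\in F\setminus G:|f(x)|\ge\varepsilon\}$ is the detail the paper leaves implicit, and the separate treatment of $G=\emptyset$ is harmless (the lemma also applies trivially in that case).
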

\begin{proof}
If $F$ is a closed subset of $K$ then $F\cap K'$ admits an extension operator in $K$. Moreover, the map $E:C(F|F\cap K')\to C(K|F\cap K')$
defined by $E(f)\vert_F=f$, $E(f)\vert_{K\setminus F}\equiv0$ is an extension operator modulo $F\cap K'$ for $F$ in $K$. The conclusion
follows from Lemma~\ref{thm:lemamoduloG}.
\end{proof}

\begin{cor}\label{thm:continuousimage}
Let $K$ be a compact Hausdorff scattered space of finite height. If $K$ is a continuous image of a compact Hausdorff space having the extension property
then $K$ has the extension property and the space $C(K)$ is isomorphic to $c_0(I)$, for some index set $I$.
\end{cor}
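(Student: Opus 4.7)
The plan is to proceed by induction on the Cantor--Bendixson height $h$ of $K$, establishing both conclusions simultaneously. The base case $K'=\emptyset$, in which $K$ is finite and discrete, is trivial: every nonempty closed subset admits an extension operator, and $C(K)$ is finite-dimensional, hence isomorphic to $c_0(I)$ for a finite $I$.

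For the inductive step, fix a continuous surjection $\phi:M\to K$ from a compact Hausdorff space $M$ having the extension property, and first apply the inductive hypothesis to the derivative $K'$. It is scattered of height $h-1$, and $\phi$ restricts to a continuous surjection $\phi^{-1}(K')\to K'$. Since $\phi^{-1}(K')$ is closed in $M$ and the extension property is hereditary to closed subspaces, $\phi^{-1}(K')$ has the extension property; the inductive hypothesis then gives that $K'$ has the extension property and $C(K')\cong c_0(J)$ for some set $J$.

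The heart of the argument, and the main obstacle, is to show that $K'$ admits an extension operator in $K$, equivalently, that $C(K|K')$ is complemented in $C(K)$. Since $\phi$ is surjective, $\phi^*:C(K)\to C(M)$ is an isometric embedding, so $\phi^*[C(K|K')]$ is an isometric copy of $C(K|K')$ in $C(M)$. Applying Theorem~\ref{thm:main} with $M$ in the role of ``$K$'', our $K$ in the role of ``$L$'', and $F=K'$ yields a projection $P:C(M)\to\phi^*[C(K|K')]$. The restriction $P\vert_{\phi^*[C(K)]}$ takes values in $\phi^*[C(K|K')]\subseteq\phi^*[C(K)]$ and is the identity on $\phi^*[C(K|K')]$, hence is a projection of $\phi^*[C(K)]$ onto $\phi^*[C(K|K')]$; transferring via the isometric isomorphism $\phi^*:C(K)\to\phi^*[C(K)]$ produces the required projection of $C(K)$ onto $C(K|K')$. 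Combined with the inductive conclusion that $K'$ has the extension property, Proposition~\ref{thm:willfollow} then yields that $K$ has the extension property.

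For the isomorphism $C(K)\cong c_0(I)$, the extension operator for $K'$ in $K$ splits the short exact sequence $0\to C(K|K')\to C(K)\to C(K')\to 0$, giving $C(K)\cong C(K|K')\oplus C(K')$. Since $K\setminus K'$ is discrete and open in $K$ and every infinite subset of $K\setminus K'$ accumulates only in $K'$, the restriction map $f\mapsto f\vert_{K\setminus K'}$ is an isometric isomorphism $C(K|K')\cong c_0(K\setminus K')$. Together with the inductive $C(K')\cong c_0(J)$ and the identification $c_0(A)\oplus c_0(B)\cong c_0(A\sqcup B)$, this gives $C(K)\cong c_0(I)$ for $I=(K\setminus K')\sqcup J$. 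The subtle point throughout is the observation in the third paragraph: a projection onto $\phi^*[C(K|K')]$ in the ambient $C(M)$ restricts automatically to a projection within the intermediate subspace $\phi^*[C(K)]$, because the range of the projection already lies inside that subspace.
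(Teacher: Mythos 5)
Your proof is correct and takes essentially the same route as the paper: both arguments reduce to showing that $C(K|K')$ is complemented in $C(K)$ by applying Theorem~\ref{thm:main} through the surjection $\phi$ (your third paragraph is exactly the observation packaged in the paper as Remark~\ref{thm:remcontinuousimage}), and then conclude by induction on the height via Proposition~\ref{thm:willfollow} and the identification $C(K|K')\cong c_0(K\setminus K')$. The only cosmetic difference is that the paper invokes the complementation result for the copy of $c_0(K\setminus K')$ inside $C(K)$ directly, whereas you apply Theorem~\ref{thm:main} with $L=K$, $F=K'$ inside $C(M)$ and transfer the projection back by restriction.
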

\begin{proof}
Note that $C(K|K')$ is isometric to $c_0(K\setminus K')$ and therefore it is complemented in $C(K)$, by Remark~\ref{thm:remcontinuousimage}.
Thus, $K'$ admits an extension operator in $K$ and $C(K)\cong c_0(K\setminus K')\oplus C(K')$. The conclusion is obtained using induction on the height of $K$.
\end{proof}

\begin{cor}\label{thm:corHol}
If $K$ is a compact Hausdorff space having the extension property and $L$ is a compact Hausdorff scattered space of finite height such that $C(K)$ contains
an isometric copy of $C(L)$ then $L$ has the extension property and the space $C(L)$ is isomorphic to $c_0(I)$, for some index set $I$.
\end{cor}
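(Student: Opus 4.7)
The plan is to reduce Corollary~\ref{thm:corHol} to Corollary~\ref{thm:continuousimage} by producing, out of the given isometric embedding $S:C(L)\to C(K)$, a continuous surjection onto $L$ from a closed subset of $K$. Such a surjection is provided by Holsztyński's classical theorem on linear isometric embeddings between spaces of continuous functions on compact Hausdorff spaces.

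Concretely, the first step is to invoke Holsztyński's theorem: for the linear isometry $S:C(L)\to C(K)$, there exist a nonempty closed subset $K_0\subseteq K$, a continuous surjection $\phi:K_0\to L$, and a continuous function $h:K_0\to\{-1,+1\}$ such that $S(f)\vert_{K_0}=h\cdot(f\circ\phi)$ for every $f\in C(L)$. Only the existence of $K_0$ and of the surjection $\phi$ will be used; in particular, $L$ is a continuous image of $K_0$.

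The second step is to recall that the extension property is hereditary to closed subspaces, a fact noted just after the definition in Section~\ref{sec:CLCK}. Hence $K_0$ inherits the extension property from $K$, and $L$ is a continuous image of the compact Hausdorff space $K_0$, which has the extension property. Since $L$ is, by hypothesis, scattered of finite height, Corollary~\ref{thm:continuousimage} applies to $\phi:K_0\to L$ and delivers both conclusions simultaneously: $L$ has the extension property and $C(L)\cong c_0(I)$ for some index set $I$.

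The only ingredient not already established in the paper is Holsztyński's theorem itself, and quoting it correctly is the sole nontrivial obstacle; the rest is bookkeeping. Note that Theorem~\ref{thm:main} enters only indirectly, through Remark~\ref{thm:remcontinuousimage} inside the proof of Corollary~\ref{thm:continuousimage}.
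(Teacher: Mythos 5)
Your proposal is correct and follows exactly the paper's own argument: invoke Holszty\'nski's theorem to realize $L$ as a continuous image of a closed subspace $K_0$ of $K$, note that $K_0$ inherits the extension property, and apply Corollary~\ref{thm:continuousimage}. The paper's proof is a one-line version of precisely this reduction.
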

\begin{proof}
Simply observe that, by Holszty\'nski's Theorem \cite{Holsztynski}, if $C(K)$ contains an isometric copy of $C(L)$ then $L$ is a continuous image of a closed
subspace of $K$.
\end{proof}

As an immediate consequence of Proposition~\ref{thm:willfollow}, we obtain a useful criterion for verifying if a space of finite height has the
extension property.
\begin{cor}
Let $K$ be a compact Hausdorff scattered space of finite height and denote by $K^{(n)}$ its $n$-th Cantor--Bendixson derivative.
If for all $n$, $K^{(n+1)}$ admits an extension operator in $K^{(n)}$ then $K$ has the extension property.\qed
\end{cor}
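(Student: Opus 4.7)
The plan is to induct on the (Cantor--Bendixson) height $n$ of $K$, using Proposition~\ref{thm:willfollow} as the inductive step.

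For the base case, take $n=1$, so $K'=\emptyset$. Then $K$ has no accumulation points, hence $K$ is a finite discrete space, for which the extension property is trivial (any bounded extension works, for instance the extension by zero after composing with characteristic functions of points).

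For the inductive step, suppose the corollary holds for all spaces of height at most $n$, and let $K$ have height $n+1$. Consider $K'$, which is a compact Hausdorff scattered space of height $n$. Since the Cantor--Bendixson derivative satisfies $(K')^{(m)} = K^{(m+1)}$, the hypothesis on $K$ (that $K^{(m+1)}$ admits an extension operator in $K^{(m)}$ for every $m$) transfers verbatim to $K'$: namely, $(K')^{(m+1)} = K^{(m+2)}$ admits an extension operator in $(K')^{(m)} = K^{(m+1)}$ for every $m$. By the inductive hypothesis, $K'$ has the extension property. By assumption, $K'$ itself admits an extension operator in $K$. Proposition~\ref{thm:willfollow} then yields that $K$ has the extension property, completing the induction.

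There is no real obstacle: once Proposition~\ref{thm:willfollow} is in hand, the corollary is essentially a repackaging via induction, and the only thing to verify carefully is that the hypothesis on iterated derivatives of $K$ passes to the corresponding hypothesis on iterated derivatives of $K'$, which is immediate from $(K')^{(m)}=K^{(m+1)}$.
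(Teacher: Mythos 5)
Your proof is correct and is precisely the argument the paper intends: the corollary is stated there as an immediate consequence of Proposition~\ref{thm:willfollow}, with the induction on height (using $(K')^{(m)}=K^{(m+1)}$ to transfer the hypothesis to $K'$) left implicit. Your write-up simply makes that routine induction explicit, so there is nothing to add.
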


By the {\em Alexandroff compactification\/} of a locally compact Hausdorff space $X$ we mean $X$ itself, if $X$ is compact, or its one-point
compactification $X\cup\{\infty\}$, if $X$ is not compact.
\begin{prop}\label{thm:assembler}
The class of compact Hausdorff spaces having the regular extension property is closed under the operation of taking the Alexandroff compactification
of arbitrary topological sums.
\end{prop}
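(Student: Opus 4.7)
The plan is to construct, for each nonempty closed subset $F$ of $K$, a regular extension operator $E_F : C(F) \to C(K)$ by patching regular extension operators on the individual summands, being careful about the point at infinity. Let $X = \bigsqcup_\alpha K_\alpha$, so that $K$ equals $X$ if $X$ is compact and $K = X \cup \{\infty\}$ otherwise.

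First I would reduce to the case $\infty \in F$. When $X$ is compact, no reduction is needed: one directly defines $E_F(f)$ to equal $E_{F_\alpha}(f|_{F_\alpha})$ on each $K_\alpha$ meeting $F$ and the constant $f(y_0)$ on summands disjoint from $F$, for some fixed $y_0 \in F$. When $X$ is noncompact and $\infty \notin F$, note that $X$ is open in $K$, so $\{\infty\}$ is clopen in $F \cup \{\infty\}$; hence the map $f \mapsto \tilde f$ defined by $\tilde f|_F = f$ and $\tilde f(\infty) = f(y_0)$ is an isometric linear embedding of $C(F)$ into $C(F \cup \{\infty\})$ sending $\mathbf{1}_F$ to $\mathbf{1}_{F \cup \{\infty\}}$; composing with a regular extension operator for $F \cup \{\infty\}$ gives one for $F$.

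Assume now $\infty \in F$ and set $F_\alpha = F \cap K_\alpha$. Choose a regular extension operator $E_{F_\alpha}$ for each $\alpha$ with $F_\alpha \neq \emptyset$. Given $f \in C(F)$, subtract off its value at infinity by setting $g = f - f(\infty)\mathbf{1}_F$, so that $g(\infty) = 0$. Define $E_F(f)$ to equal $E_{F_\alpha}(g|_{F_\alpha}) + f(\infty)\mathbf{1}_{K_\alpha}$ on each $K_\alpha$ meeting $F$, to equal $f(\infty)$ on each $K_\alpha$ disjoint from $F$, and to equal $f(\infty)$ at $\infty$. Linearity and the extension property on each summand are immediate.

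The main obstacle will be verifying continuity of $E_F(f)$ at $\infty$; this is exactly the step where \emph{regularity} (the norm bound $\|E_{F_\alpha}\| \leq 1$) is essential, since without it the extensions on the summands, though individually bounded, could fail to approach $f(\infty)$ at infinity. Continuity of $f$ at $\infty$ together with $g(\infty) = 0$ implies that the closed set $\{x \in F : |g(x)| \geq \varepsilon\}$ is compact in $X$ for every $\varepsilon > 0$ and hence meets only finitely many summands; thus $\|g|_{F_\alpha}\|_\infty < \varepsilon$ for all but finitely many $\alpha$, and the bound $\|E_{F_\alpha}\| \leq 1$ then yields $|E_F(f)(x) - f(\infty)| < \varepsilon$ off a compact subset of $X$, giving continuity at $\infty$. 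The remaining checks confirming regularity of $E_F$ are routine: $E_F(\mathbf{1}_F) = \mathbf{1}_K$ because $g = 0$ and $f(\infty) = 1$ in that case, while the standard consequence of regularity $\min g|_{F_\alpha} \leq E_{F_\alpha}(g|_{F_\alpha}) \leq \max g|_{F_\alpha}$ confines $E_F(f)$ to $[\min f, \max f]$ after the shift by $f(\infty)$, giving $\|E_F\| \leq 1$.
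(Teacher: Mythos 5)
Your construction is essentially the paper's own: patch the regular extension operators of the summands, fill the summands missing $F$ with the constant value at a chosen base point (or at $\infty$), and use regularity to get continuity at $\infty$ (the paper phrases this via positivity applied to $f$ directly, you via the norm bound applied to the centered function $g=f-f(\infty)\mathbf 1_F$, which by unitality of $E_{F_\alpha}$ yields the identical operator). The argument is correct and complete.
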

\begin{proof}
Let $(K_i)_{i\in I}$ be a family of nonempty compact Hausdorff spaces having the regular extension property. Denote by $\bigcupdot{i\in I}K_i$ its
topological sum and by $K$ the Alexandroff compactification of the latter. Let $F$ be a nonempty closed subset of $K$ and set $F_i=F\cap K_i$.
When $F_i\ne\emptyset$, let $E_i$ be a regular extension operator for $F_i$ in $K_i$. Let us define a regular extension operator $E_F$
for $F$ in $K$.

If $F$ is contained in $\bigcupdot{i\in I}K_i$, choose an arbitrary $p\in F$
and define $E_F$ by setting $E_F(f)\vert_{K_i}=E_i(f\vert_{F_i})$, if $F_i\ne\emptyset$,
$E_F(f)\vert_{K_i}\equiv f(p)$, if $F_i=\emptyset$, and (if $I$ is infinite) $E_F(f)(\infty)=f(p)$. If $\infty$ belongs to $F$, define $E_F$ by setting
$E_F(f)\vert_{K_i}=E_i(f\vert_{F_i})$, if $F_i\ne\emptyset$, $E_F(f)\vert_{K_i}=f(\infty)$, if $F_i=\emptyset$, and $E_F(f)(\infty)=f(\infty)$.
In the case when $\infty\in F$, the continuity of $E_F(f)$ at the point $\infty$ is easily proven using the fact that $E_i$ is a positive operator.
\end{proof}

%\begin{cor}\label{thm:corTaras}
%If $K$ is a hereditarily paracompact scattered compact Hausdorff space then $K$ has the regular extension property.
%\end{cor}
%\begin{proof}
%It is known \cite[Theorem~3,~(3)]{Taras} that the smallest class of spaces closed under the operation of taking Alexadroff compactifications
%of arbitrary topological sums and containing the singleton is the class of hereditarily paracompact scattered compact Hausdorff spaces.
%\end{proof}

\begin{cor}\label{thm:corheight2}
If $K$ is a scattered compact Hausdorff space of height $2$ then $K$ has the regular extension property.
\end{cor}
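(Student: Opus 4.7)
The plan is to apply Proposition~\ref{thm:assembler} in two stages: once to recognize each clopen piece of $K$ clustered around a point of $K'$ as the Alexandroff compactification of a discrete space, and once to reassemble $K$ from those pieces.

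First, the height~$2$ hypothesis gives $(K')'=\emptyset$, so $K'$ is discrete; being closed in the compact space $K$, it is itself a compact discrete space, hence finite. Write $K'=\{p_1,\dots,p_n\}$. I would then invoke the standard fact that every scattered compact Hausdorff space is zero-dimensional (any connected subset with more than one point would contain an isolated point, producing a proper clopen subset and contradicting connectedness) to produce pairwise disjoint clopen sets $U_1,\dots,U_n$ in $K$ with $p_i\in U_i$ for each $i$. The complement $R=K\setminus\bigcup_i U_i$ is then clopen, disjoint from $K'$, and therefore a finite discrete space.

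Each $U_i$ is a compact Hausdorff space whose only non-isolated point is $p_i$, so $U_i$ is naturally the Alexandroff compactification of the discrete space $U_i\setminus\{p_i\}$, viewed as the topological sum of singletons. Since singletons trivially have the regular extension property, Proposition~\ref{thm:assembler} yields that $U_i$ does as well. The property is immediate for the finite discrete space $R$, and $K$ itself is the topological sum of the $U_i$ and $R$, which (being already compact) is its own Alexandroff compactification; one more application of Proposition~\ref{thm:assembler} then gives the regular extension property for $K$. No real obstacle arises; the only ingredient used beyond the paper's own machinery is the zero-dimensionality of scattered compact Hausdorff spaces, which is elementary.
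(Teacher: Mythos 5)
Your proof is correct and follows essentially the same route as the paper, whose entire argument is the one-line observation that such a $K$ is a finite topological sum of one-point compactifications of discrete spaces, to which Proposition~\ref{thm:assembler} applies. You have simply supplied the details (finiteness of $K'$, zero-dimensionality of scattered compacta, the clopen decomposition) that the paper leaves implicit.
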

\begin{proof}
Such a $K$ is a finite topological sum of one-point compactifications of discrete spaces.
\end{proof}

\subsection{Negative results about the extension property}
We now give a few examples of spaces for which the extension property fails (Examples~\ref{exa:laddersystem}, \ref{exa:2akappa},
and \ref{exa:Mrowka}). We also prove results relating the extension property with cardinal invariants of the topological space $K$
(Corollary~\ref{thm:corccc}) and with the Grothendieck property of the space $C(K)$ (Proposition~\ref{thm:propGrothendieck}).

\begin{example}\label{exa:laddersystem}
The {\em ladder system space\/} \cite[pg.~164]{Ark} is an example of a scattered compact Hausdorff space of height $3$ that does not have the extension property.
We recall the relevant definitions.
Denote by $\omega_1$ the first uncountable ordinal and by $L(\omega_1)$ the subset of $\omega_1$ consisting of limit ordinals.
Set $S(\omega_1)=\omega_1\setminus L(\omega_1)$. Recall that a {\em ladder system\/} on $\omega_1$ is a family $\mathcal S=(s_\alpha)_{\alpha\in L(\omega_1)}$
where, for each $\alpha\in L(\omega_1)$, we have $s_\alpha=\big\{s^n_\alpha:n\in\omega\big\}$ and $(s^n_\alpha)_{n\in\omega}$ is a strictly
increasing sequence in $S(\omega_1)$ order-converging to $\alpha$. The {\em ladder system topology\/} $\tauS$
on $\omega_1$ is the one for which the elements of $S(\omega_1)$ are isolated and the fundamental neighborhoods of a limit
ordinal $\alpha$ are unions of $\{\alpha\}$ with sets cofinite in $s_\alpha$. The ladder system space
is the one-point compactification $\KS=\omegaS\cup\{\infty\}$, where $\omegaS=(\omega_1,\tauS)$. We claim that the closed subset
$F=L(\omega_1)\cup\{\infty\}$ does not admit an extension operator in $\KS$. Assuming by contradiction that an extension operator $E_F$
exists, then, for each $\alpha\in L(\omega_1)$, let $\xi_\alpha\in C(\KS)$ denote the image under $E_F$ of the characteristic function of $\{\alpha\}$.
By the continuity of $\xi_\alpha$, there exists $\phi(\alpha)\in s_\alpha$ with $\xi_\alpha\big(\phi(\alpha)\big)>\frac12$. The map $\phi:L(\omega_1)\to\omega_1$
is regressive and the Pressing Down Lemma (\cite[Lemma~II.6.15]{Kunen}) yields a stationary subset $S$ of $L(\omega_1)$ on which $\phi$ is constant
and equal to some $\beta\in\omega_1$. Then, for each positive integer $n$, we pick distinct elements $\alpha_1,\ldots,\alpha_n\in S$ and
we obtain:
\[\Vert E_F\Vert\ge\Vert\xi_{\alpha_1}+\cdots+\xi_{\alpha_n}\Vert\ge\xi_{\alpha_1}(\beta)+\cdots+\xi_{\alpha_n}(\beta)>\frac n2.\]
\end{example}

\medskip

Recall that a topological space $X$ is said to satisfy the {\em countable chain condition\/} (briefly: $X$ is {\em ccc}) if every
family of disjoint nonempty open subsets of $X$ is countable.
\begin{prop}
Let $K$ be a compact Hausdorff space and $F$ be a closed subset of $K$. If $K$ is ccc and $F$ admits an extension operator in $K$ then $F$ is ccc.
\end{prop}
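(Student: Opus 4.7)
My plan is to assume for contradiction that $F$ is not ccc, producing an uncountable family $(U_\alpha)_{\alpha<\omega_1}$ of pairwise disjoint nonempty open subsets of $F$, and to derive a contradiction with the ccc of $K$. First I choose $x_\alpha\in U_\alpha$ and, by Urysohn's lemma in $F$, a function $f_\alpha\in C(F)$ with $0\le f_\alpha\le1$, $f_\alpha(x_\alpha)=1$, and $f_\alpha$ vanishing outside $U_\alpha$. Setting $\xi_\alpha:=E_F(f_\alpha)\in C(K)$ and $M:=\Vert E_F\Vert$, I have $\Vert\xi_\alpha\Vert\le M$ and $\xi_\alpha(x_\alpha)=1$, so the open sets $W_\alpha:=\{x\in K:\xi_\alpha(x)>\tfrac12\}$ are nonempty.

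The main step will be a pointwise multiplicity bound on the family $\{W_\alpha\}$ via representing measures. For each $x\in K$, the map $f\mapsto E_F(f)(x)$ is a bounded linear functional on $C(F)$ of norm at most $M$, hence by Riesz there is a signed Radon measure $\nu_x$ on $F$ with $\Vert\nu_x\Vert\le M$ and $E_F(f)(x)=\int_Ff\,\dd\nu_x$. Since $f_\alpha$ is supported in $U_\alpha$ with $\vert f_\alpha\vert\le1$, I obtain $\vert\xi_\alpha(x)\vert\le\vert\nu_x\vert(U_\alpha)$; pairwise disjointness of the $U_\alpha$'s then yields
\[\sum_\alpha\vert\nu_x\vert(U_\alpha)\le\vert\nu_x\vert(F)\le M,\]
so at most $2M$ indices $\alpha$ can satisfy $x\in W_\alpha$.

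To finish, I derive a contradiction from the existence in the ccc space $K$ of an uncountable family of nonempty open sets with uniformly bounded pointwise multiplicity. A maximal pairwise disjoint subfamily of $\{W_\alpha\}$ is countable by ccc, so by pigeonhole some member $D$ of this subfamily is met by an uncountable subfamily of the $W_\alpha$'s; the intersections with $D$ then form an uncountable family of nonempty open subsets of the (itself ccc, as an open subset of the ccc space $K$) space $D$, with pointwise multiplicity strictly smaller, since the slot occupied by $D$ itself has been removed. Iterating this reduction at most finitely many times forces an uncountable pairwise disjoint family of nonempty open sets in some open subset of $K$, contradicting ccc of $K$.

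The main obstacle is the multiplicity estimate in the middle paragraph, which relies on the duality between $E_F$ and the representing Radon measures on $F$; once that is in place, the combinatorial drill-down at the end is a short finite induction on the multiplicity bound.
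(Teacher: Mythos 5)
Your proof is correct, but it takes a genuinely different route from the paper's. The paper's argument is a two-line reduction to a known theorem: since $\rho_F\circ E_F=\Id$, the operator $E_F$ is an isomorphic embedding of $C(F)$ into $C(K)$, and one then invokes the characterization (cited to Fabian et al.) that a compact Hausdorff space fails to be ccc if and only if its space of continuous functions contains an isomorphic copy of $c_0(I)$ for some uncountable $I$; so if $F$ failed ccc, $C(K)$ would contain such a copy and $K$ would fail ccc. You instead unpack everything by hand: you represent each functional $f\mapsto E_F(f)(x)$ by a signed Radon measure $\nu_x$ on $F$ of total variation at most $\Vert E_F\Vert$, use the disjointness of the $U_\alpha$'s to get the uniform pointwise multiplicity bound on the sets $W_\alpha$, and then run a finite induction on the multiplicity bound to contradict ccc (your drill-down is sound: open subspaces of ccc spaces are ccc, removing the slot occupied by the chosen member $D$ strictly decreases the bound, and multiplicity $1$ means pairwise disjoint). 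What your approach buys is self-containedness and a more elementary, quantitative flavor --- it is essentially a direct proof of the relevant half of the $c_0(I)$ characterization specialized to this situation (your $W_\alpha$'s witness that the $\xi_\alpha$'s span a copy of $c_0(\omega_1)$); what the paper's approach buys is brevity at the cost of citing a nontrivial external result.
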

\begin{proof}
Notice that the image of an extension operator $E_F:C(F)\to C(K)$ is an isomorphic copy of $C(F)$ in $C(K)$. The conclusion follows using the fact
that a compact Hausdorff space $K$ fails to be ccc if and only if $C(K)$ contains an isomorphic copy of $c_0(I)$ for some uncountable
set $I$ (see \cite[Theorem~14.26]{Fabian}).
\end{proof}

\begin{cor}\label{thm:corccc}
If a compact Hausdorff space $K$ has the extension property and is ccc then $K$ has countable spread, i.e., every discrete subset of $K$ is countable.
\end{cor}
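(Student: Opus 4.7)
The plan is to deduce this directly from the preceding proposition together with a small observation about closures of discrete sets. Let $D$ be a discrete subset of $K$; I would prove $D$ is countable by passing to $F=\overline D$, which is closed in $K$. Since $K$ has the extension property, $F$ admits an extension operator in $K$, so by the preceding proposition $F$ is ccc.

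Next I would show that every $d\in D$ is an isolated point of $F$. Because $D$ is discrete in $K$, there is an open set $U\subseteq K$ with $U\cap D=\{d\}$. Then $U\setminus\{d\}$ is open in $K$ and disjoint from $D$; since the set of points \emph{not} adherent to $D$ is the largest open set disjoint from $D$, the open set $U\setminus\{d\}$ is also disjoint from $\overline D=F$. Hence $U\cap F=\{d\}$, so $\{d\}$ is open in $F$.

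Finally, the family $\bigl\{\{d\}:d\in D\bigr\}$ is a pairwise disjoint family of nonempty open subsets of $F$, and the ccc of $F$ forces it to be countable, giving $|D|\le\aleph_0$.

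There is no real obstacle here; the only content beyond invoking the previous proposition is the elementary fact that the points of any set $D$ are automatically isolated in $\overline D$, which is what transports discreteness of $D$ into a violation of ccc of $\overline D$.
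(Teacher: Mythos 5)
Your proof is correct and follows essentially the same route as the paper's: apply the preceding proposition to $F=\overline D$ and observe that the points of a discrete set $D$ remain isolated in $\overline D$, so an uncountable discrete $D$ would make $F$ fail the ccc. One small caveat: your closing remark that the points of \emph{any} set are automatically isolated in its closure is false in general (consider the rationals in the reals); your actual argument correctly uses the discreteness of $D$, so nothing is broken.
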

\begin{proof}
Notice that if $D$ is an uncountable discrete subset of $K$ then $F=\overline D$ is not ccc, since the points of $D$ are isolated in $F$.
\end{proof}

\begin{example}\label{exa:2akappa}
For an uncountable cardinal $\kappa$, the space $2^\kappa$ does not have the extension property. Namely, $2^\kappa$ is ccc (\cite[Theorem~II.1.9]{Kunen}) and
the characteristic functions of the singleton subsets of $\kappa$ form a discrete subset of $2^\kappa$ of size $\kappa$. In fact, we can show
that $2^\kappa$ is not a continuous image of a compact Hausdorff space having the extension property. Namely, since the ladder system space $\KS$
is a Boolean space of weight $\omega_1$, it is homeomorphic to a subspace of $2^\kappa$. But $\KS$ is not a continuous image of a compact Hausdorff
space having the extension property (Corollary~\ref{thm:continuousimage} and Example~\ref{exa:laddersystem}).
\end{example}

\begin{example}\label{exa:Mrowka}
Another example of a scattered compact Hausdorff space of height $3$ that does not have the extension property is {\em Mr\'owka's space\/}
(the one-point compactification of the space $X$ constructed in \cite{Mrowka}). Namely, Mr\'owka's space is separable (hence ccc) and has
uncountable spread. Then Corollary~\ref{thm:corccc} applies.
\end{example}

Recall that a Banach space $X$ has the {\em Grothendieck property\/} if every bounded operator from $X$ to a separable Banach space
is weakly compact.
\begin{prop}\label{thm:propGrothendieck}
Let $K$ be an infinite compact Hausdorff space. If $C(K)$ has the Grothendieck property then $K$ does not have the extension property. Moreover,
$K$ is not a continuous image of a compact Hausdorff space having the extension property.
\end{prop}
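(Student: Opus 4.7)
The plan is to show that, under the hypotheses, $C(K)$ contains an uncomplementable isometric copy of $c_0$, while Corollary~\ref{thm:corc0I} together with Remark~\ref{thm:remcontinuousimage} forces such a copy to be complemented whenever $K$ is a continuous image of a space with the extension property. Since $K$ is a continuous image of itself, the ``moreover'' statement is the stronger assertion and implies the first.

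\textbf{Step 1: an isometric copy of $c_0$ in $C(K)$.} Since $K$ is an infinite Hausdorff space, a straightforward inductive bisection using Hausdorff separation yields an infinite pairwise disjoint family $(U_n)_{n\in\omega}$ of nonempty open subsets of $K$. For each $n$, pick $x_n\in U_n$ and, by Urysohn's lemma, $f_n\in C(K)$ with $\Vert f_n\Vert=1$, $f_n(x_n)=1$, and $\mathrm{supp}(f_n)\subset U_n$. Disjointness of supports together with $(a_n)\in c_0$ makes $\sum_n a_n f_n$ converge uniformly in $C(K)$, and evaluation at each $x_n$ recovers $a_n$, so $(a_n)\mapsto\sum_n a_n f_n$ is an isometric embedding $c_0\hookrightarrow C(K)$.

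\textbf{Step 2: forced complementation.} Assume for contradiction that $K=\phi(K_0)$ for some compact Hausdorff $K_0$ with the extension property. Corollary~\ref{thm:corc0I} says that every isometric copy of $c_0$ in $C(K_0)$ is complemented, and Remark~\ref{thm:remcontinuousimage} transfers this property to the continuous image $K$ (it is the special case of Theorem~\ref{thm:main} with $L$ the one-point compactification of a countable discrete set and $F=\{\infty\}$). Hence the embedding of Step~1 is complemented in $C(K)$, giving a bounded surjection $P\colon C(K)\to c_0$ whose restriction to the embedded copy of $c_0$ is the identity.

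\textbf{Step 3: contradiction with the Grothendieck property.} Since $c_0$ is separable and $C(K)$ has the Grothendieck property, $P$ is weakly compact. But then $P$ restricted to the embedded $c_0$ is weakly compact, i.e., the identity of $c_0$ is weakly compact; this forces $c_0$ to be reflexive, which it is not. This contradiction completes the proof.

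The argument is mechanical once the three ingredients are in place. The only point that requires a little care is producing an embedding of $c_0$ that is truly \emph{isometric} (as demanded by Corollary~\ref{thm:corc0I}) rather than merely isomorphic; this is precisely why the disjoint-support Urysohn construction is used in Step~1 instead of an arbitrary basic sequence argument.
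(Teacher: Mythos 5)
Your proposal is correct and follows essentially the same route as the paper: exhibit an isometric copy of $c_0$ in $C(K)$, invoke Remark~\ref{thm:remcontinuousimage} (i.e., Corollary~\ref{thm:corc0I} transferred to continuous images) to force its complementation, and contradict the Grothendieck property. You merely spell out two steps the paper leaves as known facts, namely the disjoint-support construction of the isometric $c_0$ and the argument that a complemented separable subspace of a Grothendieck space must be reflexive.
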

\begin{proof}
If $K$ is infinite then $C(K)$ contains an isometric copy of $c_0$. If $K$ were a continuous image of a compact Hausdorff space having the extension
property then this copy of $c_0$ would be complemented in $C(K)$ (Remark~\ref{thm:remcontinuousimage}).
But every separable complemented subspace of a space with the Grothendieck property is reflexive.
\end{proof}

\end{section}

\end{document}